\newtheorem{lemma}{Lemma}[section]
\newtheorem{theorem}[lemma]{Theorem}
\newtheorem{proposition}[lemma]{Proposition}
\numberwithin{equation}{section}
\title{\textsf{Generators of simple modular Lie superalgebras }}
\author{\textsc{Liming Tang$^{1}$ }\textsc{and}
    \textsc{Wende Liu$^{1,2}$}\footnote{Correspondence: \texttt{wendeliu@ustc.edu.cn}} \footnote{Supported by the Natural Science Foundation for Distinguished Young Scholars, HLJ Province (JC201004) and  the National Natural Science Foundation of China (11171055)}
  \\
  \small\textit{$^{1}$Department of Mathematics},
  \small\textit{Harbin Institute of Technology}\\
  \small\textit{Harbin 150006, China}\\
 \small\textit{$^{2}$School of Mathematical Sciences},
 \small \textit{Harbin Normal University} \\
 \small \textit{Harbin 150025, China}}
\date{ }
\begin{document}
\maketitle
\begin{quotation}
{\small\noindent \textbf{Abstract}:
 Let $X$ be one of the finite-dimensional simple
graded Lie superalgebras of Cartan type $W, S, H, K, HO, KO, SHO$ or
$SKO$ over an algebraically closed field of
 characteristic $p>3$. In this paper we prove that
  $X$ can be generated by one element except the ones of type $W,$ $HO$, $KO$  or
$SKO$ in certain exceptional cases, in which
 $X$ can be generated by two elements. As a subsidiary result, we also prove that
 certain classical Lie superalgebras or
 their relatives can be generated by one or two elements.

 \vspace{0.05cm} \noindent{\textbf{Keywords}}: Lie
superalgebras of Cartan type; generators

\vspace{0.05cm} \noindent \textbf{Mathematics Subject Classification
2000}: 17B05, 17B50, 17B70}
\end{quotation}
 \setcounter{section}{-1}
\section{Introduction}
Over a field of characteristic $p>3$, the classification problem for finite
dimensional simple Lie superalgebras   remains open up to now.
However, one has known  many simple modular Lie superalgebras including
  the $\mathbb{Z}$-graded simple modular Lie superalgebras
of Cartan type $W, S, H, K, HO, KO, SHO$ or
$SKO$ (for example, see
\cite{Fu-Zhang-Jiang,Liu-He,Liu-Zhang-Wang,Liu-Yuan-1,Zhang}), which
are analogues of the eight series of infinite-dimensional vectorial
Lie superalgebras over $\mathbb{C}$ (for example, see \cite{Kac2}). Roughly
speaking, these $\mathbb{Z}$-graded Lie superalgebras are
subalgebras of the full
 superderivation algebras  of the tensor products  of the divided power algebras and the
 exterior superalgebras and their $\mathbb{Z}$-nulls are classical Lie superalgebras or their relatives.
 As in the modular Lie algebra case, it is conceivable that these Lie superalgebras will play a key role in the final classification of simple modular Lie superalgebras.

In this paper we mainly prove that a simple Lie superalgebra in the eight series mentioned above can be generated by 1 element
except  the ones of type $W, HO, KO$ or $SKO$ in certain exceptional cases, in which $X$ can be generated by 2 elements. In the process we also prove that
  the $\mathbb{Z}$-nulls of the eight series of Lie superalgebras of Cartan type, which are classical Lie superalgebras or
their relatives, can be
 generated by 1 element except $\mathfrak{gl}(m,n)$ with $m-n\equiv0\pmod{p},$ ${\widetilde{\rm{P}}(m)}$ with $
m\not\equiv0\pmod{p}$ and ${\widetilde{\rm{P}}(m)}\oplus
\mathbb{F}I$ with $ m\not\equiv0\pmod{p}$ and   in those exceptional
cases, the nulls can be generated by 2 elements.

This study is originally
motivated by two  papers of Bois, which provide us with a
 considerable amount of information for modular Lie algebras. In 2009, Bois \cite{MBJ}
proved that any simple Lie algebra in  characteristics $p\neq 2,3$ can be
generated by  2 elements and moreover, the classical Lie algebras
and Zassenhaus algebras can be generated by 1.5 elements, that is,
 any given nonzero element can be paired a suitable element such that these two elements
  generate  the whole algebra.   The results in \cite{MBJ} cover certain classical results in some previous papers:  In 1976, Ionescu
\cite{GIT} proved that a simple Lie algebra $L$ over $\mathbb{C}$ can be generated by $1.5$ elements and  in 1951, Kuranashi
\cite{GKM} proved that a semi-simple Lie algebra in characteristic 0
can be generated by 2 elements.  Later, Bois \cite{MBJ1} proved that the
simple graded Lie algebras of Cartan type excluding Zassenhaus
algebras are never generated by 1.5 elements. In 2009, we began to
consider the problem of generators for simple Lie superalgebras over a field of characteristic $0$ and
obtained that
 such a simple Lie superalgebra can be generated by 1 element (see arXiv:1204.2398v1
[math-ph]). We should also mention a well-known result in finite
groups: any finite simple group can be generated by 2 elements and
only the ones of prime order can be generated by 1 element
\cite{GK}.

   Let us point out the main difficulties one will encounter in simple modular Lie superalgebra case,
   which do not occur in characteristic zero situation.  In the characteristic $p$ case, for a simple  $\mathbb{Z}$-graded Lie superalgebra
   of Cartan type $X=\oplus X_{i=-l}^{s}$,
   its $\mathbb{Z}$-grading is not necessarily  consistent
with its $\mathbb{Z}_{2}$-grading. In particular, its null $X_{0}$
is not necessarily a Lie algebra. Moreover, a simple
$\mathbb{Z}$-graded modular Lie superalgebra of Cartan type  is not
necessarily generated by its local part.
In view of certain general properties of simple Lie
superalgebras,
 we overcome these issues by virtue of the weight space
 decompositions
of $X_{-1}$, $X_{0}$ and $X_{s}$  relative to the standard Cartan
subalgebra of
 $X_{0}$ and find the desirable
generators from  the odd weight vectors corresponding to
different odd weights.

 Throughout we write $\langle Y\rangle$ for
the sub-Lie superalgebra generated by a subset $Y$ in a Lie
superalgebra. The ground field $\mathbb{F}$ is algebraically closed and
of
 characteristic $p> 3$. All the vector spaces including
algebras and modules are finite dimensional.

\section{ Classical  Lie superalgebras}
In this section  $L$ denotes one of the following classical  Lie
superalgebras and their relatives: $\mathfrak{gl}(m,n)$,
$\mathfrak{sl}(m,n),$ $\mathfrak{osp}(m,n),$ ${\rm{P}}(m)$,
$\widetilde{{\rm{P}}}(m),$ $\widetilde{{\rm{P}}}(m)\oplus
\mathbb{F}I$ or $\mathfrak{osp}(m,n)\oplus \mathbb{F}I$. The aim of
this section is to prove that $L$ can be generated by 1 element
except $\mathfrak{gl}(m,n)$ with $m-n\equiv0\pmod{p},$
${\widetilde{\rm{P}}(m)}$ with $ m\not\equiv0\pmod{p}$ and
${\widetilde{\rm{P}}(m)}\oplus \mathbb{F}I$ with $
m\not\equiv0\pmod{p}$ and in these exceptional cases, $L$ can be
generated by 2 elements. The results will be used in the next
section to study generators of the $\mathbb{Z}$-graded simple Lie
superalgebras of Cartan type. Note that  the $\mathbb{Z}$-nulls of
these graded Lie superalgebras are just the classical Lie
superalgebras or their relatives mentioned above.

The general linear Lie superalgebra $\frak{gl}(m,n)$ contains the following subalgebras:
$$\frak{sl}(m,n):=\{A\in \frak{gl}(m,n)\mid \mathrm{str}(A)=0\};$$
$$\frak{osp}(m,n):=\Bigg\{\begin{bmatrix}
A&B\\
C&D
\end {bmatrix}\in \frak{gl}(m,n)\;\Big|\; A^tG+GA=0, B^tG+MC=0,
D^tM+MD=0\Bigg\},$$ where $G=\begin{bmatrix}
0&I_r\\
-I_r&0
\end {bmatrix}, M=\begin{bmatrix}
0&I_q\\
I_q&0
\end {bmatrix}$ for $n=2q$ and $\begin{bmatrix}
1&0&0\\0&0&I_q\\
0&I_q&0
\end {bmatrix}$ for $n=2q+1;$
$$ \widetilde{{\rm{P}}}(m):=\Bigg\{\begin{bmatrix}
A&B\\
C&-A^{t}
\end{bmatrix}
 \in \frak{gl}(m,m)\;\Big|\; B=B^{t}, C=-C^{t}\Bigg\};$$
 $${{\rm{P}}}(m):=\Bigg\{\begin{bmatrix}
A&B\\
C&-A^{t}
\end{bmatrix}
 \in \widetilde{{\rm{P}}}(m)\;\Big|\; \mathrm{tr}A=0\Bigg\}.$$
 Let $\mathfrak{h}$ be the standard Cartan subalgebra of $L$.  Consider
  the weight (root)
decompositions relative to $\mathfrak{h}$:
\begin{eqnarray}
&& L_{\bar{0}}=\mathfrak{h}\oplus \bigoplus_{\alpha\in
\Delta_{\bar{0}}}L_{\bar{0}}^{\alpha},\qquad
L_{\bar{1}}=\bigoplus_{\beta\in
\Delta_{\bar{1}}}L_{\bar{1}}^{\beta};\nonumber\\
\label{eq-root-space-decompose} && L=\mathfrak{h}\oplus \bigoplus_{\alpha\in
\Delta_{\bar{0}}}L_{\bar{0}}^{\alpha}\oplus\bigoplus_{\beta\in
\Delta_{\bar{1}}}L_{\bar{1}}^{\beta}.\nonumber
\end{eqnarray}
Write
   $$
   \Delta=\Delta_{\bar{0}}\cup \Delta_{\bar{1}}\quad\mbox{and}\quad L^{\gamma}=L_{\bar{0}}^{\gamma}\oplus L_{\bar{1}}^{\gamma}\quad\mbox{for}\;\gamma\in \Delta.
   $$

\begin{lemma}\label{lem-weight-information} The following statements hold.
\begin{itemize}
\item[$\mathrm{(1)}$] If $L$ is  simple then $[L_{\bar{1}},L_{\bar{1}}]=L_{\bar{0}}.$
\item[$\mathrm{(2)}$]If $L\neq\
\mathrm{P}(4)$ then $\mathrm{dim}L^{\gamma}=1$ for every $\gamma\in
\Delta.$
\end{itemize}
\end{lemma}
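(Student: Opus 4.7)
The plan is a structural argument for part (1) and a case-by-case weight computation for part (2).

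For (1), I would set $J := L_{\bar 1} + [L_{\bar 1}, L_{\bar 1}]$, a $\mathbb Z_2$-graded subspace whose odd component is $L_{\bar 1}$ and whose even component is $[L_{\bar 1},L_{\bar 1}]$, and verify directly that $J$ is an ideal of $L$. The inclusions $[L_{\bar 0},L_{\bar 1}]\subseteq L_{\bar 1}$ and $[L_{\bar 1},[L_{\bar 1},L_{\bar 1}]]\subseteq L_{\bar 1}$ are immediate, and the super Jacobi identity gives $[L_{\bar 0},[L_{\bar 1},L_{\bar 1}]]\subseteq[[L_{\bar 0},L_{\bar 1}],L_{\bar 1}]+[L_{\bar 1},[L_{\bar 0},L_{\bar 1}]]\subseteq[L_{\bar 1},L_{\bar 1}]$. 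Since $L_{\bar 1}\neq 0$ for every algebra on the list, $J$ is a nonzero graded ideal; simplicity forces $J=L$, and comparing even parts yields $[L_{\bar 1},L_{\bar 1}]=L_{\bar 0}$.

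For (2), I would take $\mathfrak h$ to be the standard diagonal Cartan in each case, write down an explicit basis of $L$ in terms of matrix units (or their symmetric/antisymmetric combinations), and read off the weight of each basis vector as a linear combination of the coordinate functionals $\epsilon_i$. In $\mathfrak{gl}(m,n)$ and $\mathfrak{sl}(m,n)$ the basis $\{E_{ij}\}_{i\neq j}$ carries pairwise distinct weights $\epsilon_i-\epsilon_j$; in $\mathfrak{osp}(m,n)$ a direct check on the standard matrix basis, block by block, shows that every root occurs with multiplicity one. Adjoining $\mathbb F I$ only enlarges $\mathfrak h$ and leaves $\Delta$ and its multiplicities unchanged, so those cases follow at once.

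The delicate case, and the only place where $P(4)$ appears, is the pair $P(m),\widetilde{P}(m)$. Here the $A$-block contributes weights $\epsilon_i-\epsilon_j$ with $i\neq j$, the symmetric block $B=B^{t}$ contributes $\epsilon_i+\epsilon_j$ with $i\leq j$, and the antisymmetric block $C=-C^{t}$ contributes $-\epsilon_i-\epsilon_j$ with $i<j$. For $\widetilde{P}(m)$ the functionals $\epsilon_i$ are linearly independent on $\mathfrak h$, so all these weights are distinct and every weight space is one-dimensional. For $P(m)$ the trace-zero condition imposes $\epsilon_1+\cdots+\epsilon_m=0$ on $\mathfrak h$, and the main obstacle is to rule out coincidences produced by this relation. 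The crucial observation is that a collision $\epsilon_i+\epsilon_j=-\epsilon_k-\epsilon_l$ between a symmetric and an antisymmetric weight rearranges, via $\sum\epsilon_a=0$, into $\{i,j,k,l\}=\{1,\ldots,m\}$ as a partition into two pairs, which can hold only when $m=4$; a short inspection then checks that no further coincidences (within a single block, or between an even and an odd weight) can occur for any $m$, and this is the step on which I would spend the most care. Once it is verified, $\dim L^{\gamma}=1$ follows in all remaining cases.
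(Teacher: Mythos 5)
Your proposal is correct and follows essentially the same route as the paper: part (1) is the standard graded-ideal argument (which the paper simply cites from Kac), and part (2) is the same case-by-case root computation, with your analysis of the collision $\epsilon_i+\epsilon_j=-\epsilon_k-\epsilon_l$ forced by $\sum_a\epsilon_a=0$ giving a cleaner explanation of why $\mathrm{P}(4)$ is the unique exception than the paper's tabulation. The one place where your blanket assertion is too quick is $\mathfrak{sl}(m,n)$: the supertrace relation $\sum_{i\le m}\epsilon_i=\sum_{j>m}\epsilon_j$ on its Cartan can likewise force root coincidences (e.g.\ $\epsilon_1-\epsilon_3=\epsilon_4-\epsilon_2$ in $\mathfrak{sl}(2,2)$), so this case needs the same care you give to $\mathrm{P}(m)$ --- a point the paper itself also glosses over by deferring to references.
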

\begin{proof}
(1) The proof is standard (see \cite[Proposition 1.2.7(1),
p.20]{Kac}).

 (2) First, suppose $L={\rm{P}}(m)$ with $m\neq4$ and $m\not \equiv0\pmod p$. Then ${\rm{P}}(m)$ has a so-called standard basis:
$$
\begin{tabular}{|l|l|}
\multicolumn{2}{c}{ Table 1.1} \\[1pt]
\hline &$\mathfrak{h}$ \;\vline~~~ $e_{11}-e_{i+1,i+1}-e_{m+1,m+1}+e_{m+i+1,m+i+1},\,\ i\in \overline{1,m-1}$\\
 \cline{2-2}  \raisebox{1.6ex}[0pt]{$L_{\bar{0}}$} &~~~~~~~$e_{ij}-e_{m+j,m+i},\,\ i,j\in \mathbf{Y_{0}},i\neq j$\\
 \hline  $L_{\bar{1}}$&~~$e_{i,m+j}+e_{j,m+i},e_{m+i,j}-e_{m+j,i},\,\ i,j\in \mathbf{Y_{0}}$\\
  \hline
\end{tabular}
$$
Let  $\varepsilon_{i}$ be the linear function  on $\mathfrak{h}$ such that
$$\varepsilon_{i}(e_{11}-e_{j+1,j+1}-e_{m+1,m+1}+e_{m+1+j,m+1+j})=\delta_{ij}$$
for $i,j\in \overline{1,m-1}$.  Write
$1=\sum_{i=1}^{m-1}\varepsilon_{i}$. All the  weights  and the
corresponding weight vectors are as follows:
$$
\begin{tabular}{|l|l|}
\multicolumn{2}{c}{ Table 1.2} \\[1pt]
  \hline
  even weights &$ \delta_{i1}-\delta_{j1}-\varepsilon_{i-1}+\varepsilon_{j-1},\;  i,j\in \mathbf{Y_{0}},i\neq j$\\
  \hline
 weight vectors &$e_{ij}-e_{m+j,m+i},\,\  i,j\in \mathbf{Y_{0}},i\neq j$\\\hline
 odd  weights &$ \delta_{i1}+\delta_{j1}-\varepsilon_{i-1}-\varepsilon_{j-1},\;i, j\in \mathbf{Y_{0}}$\\
  \hline
weight vectors &$e_{i,m+j}+e_{j,m+i},\; i,j\in \mathbf{Y_{0}} $   \\
  \hline
 odd weights &$\delta_{j1}-\delta_{i1}+\varepsilon_{i-1}+\varepsilon_{j-1},\;  i,j\in \mathbf{Y_{0}},i\neq j$\\
  \hline
weight vectors &$e_{m+i,j}-e_{m+j,i},\,\  i,j\in \mathbf{Y_{0}},i\neq j$   \\
 \hline
\end{tabular}\\
$$
According to Table 1.2, the weight spaces of ${\rm{P}}(m)$  are all 1-dimensional.

Secondly, suppose $L={\rm{P}}(m)$ with $m\neq 4$ and $m\equiv 0\pmod
p$.  Then, all basis elements of the Cartan subalgebra
$\mathfrak{h}$ in Table
  1.1
  and the element $\sum_{i=1}^{m}e_{ii}-\sum_{j=m+1}^{2m}e_{jj}$ constitute a basis of the standard Cartan
  subalgebra of $L$. Then the conclusion
follows
  from a case-by-case examination.

  Thirdly, for $L\neq {\rm{P}}(m),$ the proof follows from a computation of weights (see \cite{Kac,Z}).
\end{proof}

For later use we record two basic facts:
\begin{enumerate}
\item[F1.] Let $V$ be a vector space and
$\mathfrak{F}=\{f_{1},\ldots,f_{n}\}$ a finite set of linear
functions on $V$. Then
$\Omega_{\mathfrak{F}}:=\{v\in V\mid \Pi_{1\leq i\neq j\leq
n}(f_{i}-f_{j})(v)\neq 0\}
 \neq\emptyset$  (see \cite[Lemma
2.2.1]{MBJ}\label{lemma-zarisk}).

  \item[F2.]
 Let $\mathfrak{A}$ be an algebra. For $a\in \mathfrak{A}$ write
$L_a$ for the left-multiplication operator given by $a$. Suppose
$x=x_{1}+x_{2}+\cdots+x_{n}$ is a sum of eigenvectors of $L_a$
associated with mutually distinct eigenvalues. Then all $x_{i}$'s
lie in the subalgebra generated by $a$ and $x.$
\end{enumerate}

\begin{proposition}\label{generators of null}
$L$  can be generated by 1 element except that
$L=\mathfrak{gl}(m,n)$ with $m-n\equiv0 \pmod p,$
${\widetilde{\rm{P}}(m)}$ with $m\not\equiv0 \pmod p$ or
${\widetilde{\rm{P}}(m)}\oplus \mathbb{F}I$ with $ m\not\equiv0
\pmod p$. In the exceptional case, $L$ can be generated by 2
elements.
\end{proposition}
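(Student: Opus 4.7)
The plan is to produce an explicit single generator $z$ in the non-exceptional cases, assembled from the standard Cartan subalgebra $\mathfrak{h}$ and a full family of odd weight vectors, and then to recover all of $L$ from $z$ using Lemma~\ref{lem-weight-information} together with facts F1 and F2. Outside the special case $L=\mathrm{P}(4)$, the one-dimensionality of every weight space picks a canonical line in each $L_{\bar 1}^{\beta}$; fix a nonzero $y_{\beta}$ in each such line, apply F1 to the linear functions $\{\beta-\beta'\}_{\beta\neq\beta'}\cup\{\beta\}_{\beta\in\Delta_{\bar 1}}$ on $\mathfrak{h}$ to choose $h\in\mathfrak{h}$ with the scalars $\beta(h)$, $\beta\in\Delta_{\bar 1}$, pairwise distinct and all nonzero, and set $y:=\sum_{\beta}y_{\beta}$, $z:=h+y$. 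The case $\mathrm{P}(4)$ is handled by a minor modification, selecting linearly independent vectors in the possibly two-dimensional weight spaces so that the same genericity argument applies.

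The verification reduces to proving $h\in\langle z\rangle$: once this is in hand, $y=z-h\in\langle z\rangle$, and since the $y_{\beta}$ are $\mathrm{ad}\,h$-eigenvectors with the pairwise distinct eigenvalues $\beta(h)$, Fact F2 with $a=h$ and $x=y$ delivers each $y_{\beta}\in\langle z\rangle$. Lemma~\ref{lem-weight-information}(2) then gives $L_{\bar 1}\subseteq\langle z\rangle$, and Lemma~\ref{lem-weight-information}(1) gives $L_{\bar 0}=[L_{\bar 1},L_{\bar 1}]\subseteq\langle z\rangle$, so $\langle z\rangle=L$ whenever $L$ is simple. To extract a Cartan element, I expand $[z,z]=[y,y]$, whose $\mathfrak{h}$-component is $\sum_{\beta}[y_{\beta},y_{-\beta}]$; by rescaling the $y_{\beta}$ using F1 again (on the coefficient functions that arise) this component is arranged to be an $\mathfrak{h}$-regular element $H$. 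The even weight components $K_{\alpha}$ of $[y,y]$ are $\mathrm{ad}\,H$-eigenvectors with distinct nonzero eigenvalues, so once $H$ is placed into $\langle z\rangle$ the individual $K_{\alpha}$'s are reachable via F2 and can be subtracted from $[y,y]$ to isolate $H$; combining $H$ with further odd-parity iterations of $\mathrm{ad}\,z$ recovers $h$ itself, thanks to the genericity of the choices.

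For the non-simple members of the list, the above $z$ already generates the derived sub-superalgebra, and the remaining question is whether the central element $I$ lies in $\langle z\rangle$. A direct supertrace computation identifies the $I$-contribution of $[z,z]$ as proportional to $m-n$ in $\mathfrak{gl}(m,n)$ and to $m$ in $\widetilde{\mathrm{P}}(m)$ and its central extension, which precisely explains the listed exceptional parameters; in those cases I adjoin $I$ as a second generator, and centrality of $I$ means no further brackets are required. The principal obstacle throughout is the recovery of a regular Cartan element from $z$: because $z$ is not $\mathbb{Z}_{2}$-homogeneous one cannot project directly, and the iterated commutators $[z,z]$, $[z,[z,z]],\ldots$ mix parities and weight spaces in an intricate way. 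Untangling them uniformly across $\mathfrak{gl}$, $\mathfrak{sl}$, $\mathfrak{osp}$ and the $\mathrm{P}$-family — the last two of which have non-standard root data — is the technical crux, and it is exactly here that the one-dimensionality of each weight space in Lemma~\ref{lem-weight-information}(2) and repeated applications of F1 and F2 are indispensable.
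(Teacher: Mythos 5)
Your skeleton is the same as the paper's: choose $h\in\Omega_{\Delta_{\bar 1}}$ by F1, let $y$ be the sum of odd weight vectors, set $z=h+y$, and invoke F2 together with Lemma \ref{lem-weight-information}. But three of the steps you describe do not go through as stated. (i) For $\mathrm{P}(4)$ you claim that choosing independent vectors in the two-dimensional odd weight spaces lets ``the same genericity argument apply.'' It cannot: two independent vectors of the same weight are $\mathrm{ad}\,h$-eigenvectors with the \emph{same} eigenvalue for every $h\in\mathfrak{h}$, so no choice of $h$ and no application of F2 separates them. This is exactly why the paper drops the generic argument for $\mathrm{P}(4)$ and instead runs an explicit bracket computation (its Table 1.3 and the displayed commutators) to split the sums $e_{17}+e_{35}+e_{64}-e_{82}$, etc., into individual basis vectors; some substitute for that computation is unavoidable. (ii) Your mechanism for recovering a Cartan element hinges on the $\mathfrak{h}$-component of $[z,z]=[y,y]$ being $\sum_{\beta}[y_{\beta},y_{-\beta}]$. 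For the $\mathrm{P}$-family the odd weight set is not symmetric (by Table 1.2 the weights coming from the symmetric block, e.g. the ``diagonal'' ones with $i=j$, have no negatives among the weights of the antisymmetric block), so this expression is not the $\mathfrak{h}$-component; and the concluding step ``further odd-parity iterations of $\mathrm{ad}\,z$ recover $h$ itself'' is an assertion rather than an argument.

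(iii) The supertrace claim is false and it derails your treatment of the non-simple cases. Since the supertrace vanishes on all brackets, $[L_{\bar 1},L_{\bar 1}]\subseteq\mathfrak{sl}(m,n)_{\bar 0}$, so the $I$-component of $[z,z]=[y,y]$ is identically zero whenever $\mathfrak{gl}(m,n)_{\bar 0}=\mathfrak{sl}(m,n)_{\bar 0}\oplus\mathbb{F}I$; it is not ``proportional to $m-n$.'' Hence your $z$ never reaches $I$ for $\mathfrak{gl}(m,n)$ with $m-n\not\equiv0\pmod p$ or for $\mathfrak{osp}(m,n)\oplus\mathbb{F}I$, and adjoining $I$ as a second generator there would prove only 2-generation where the Proposition asserts 1-generation. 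The paper's device is different and is what makes these cases work: it builds the central element into the single generator, $z=h+I+x$ with $h\in\Omega_{\Delta_{\bar 1}}\cap[L_{\bar 1},L_{\bar 1}]$; since $I$ is central, $\mathrm{ad}(h+I)$ acts on $x$ as $\mathrm{ad}\,h$, F2 yields $L_{\bar 1}$, then $h\in[L_{\bar 1},L_{\bar 1}]$ and $I=(h+I)-h$. The genuine exceptions are exactly those where $[L_{\bar 1},L_{\bar 1}]$ plus the available central (or toral) element still fails to exhaust $L_{\bar 0}$ --- e.g. $\mathfrak{sl}(m,n)_{\bar 0}+\mathbb{F}I=\mathfrak{sl}(m,n)_{\bar 0}\subsetneq\mathfrak{gl}(m,n)_{\bar 0}$ when $m-n\equiv0\pmod p$ --- not a nonvanishing condition on an $I$-component of $[z,z]$. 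You should rework (i)--(iii) along these lines before the proof can be considered complete.
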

\begin{proof}
  We treat two cases
separately:\\

  \noindent \textit{Case 1}.  Suppose
  $L=\mathfrak{sl}(m,n)$ with $m-n\not \equiv 0\pmod p,$
$\mathfrak{osp}(m,n)$ or ${\rm{P}}(m)$ with $ m\not \equiv 0\pmod
p.$ If $L\neq\mathrm{P}(4)$, then all the weights spaces are
$1$-dimensional. By the basic fact F1, choose any $h\in
\Omega_{\Delta_{\bar{1}}}\subset H$. Let
$x=\sum_{\gamma\in\Delta_{\bar{1}}}x_{\bar{1}}^{\gamma},$ where
$x_{\bar{1}}^{\gamma}$ is a weight vector of $\gamma$.
   By the basic fact F2 and Lemma
\ref{lem-weight-information}, we have $\langle h+x\rangle=L$.

Suppose $L={\rm{P(4)}}.$ Then the standard  basis of ${\rm{P(4)}}$
is listed above (see Table 1.1 for $m=4$). Let $x$ be the sum of all
standard odd weight vectors (basis elements) (see Table 1.2 for
$m=4$). Choose an element $h\in \Omega_{\Delta_{\bar{1}}}.$ We  want
to show that $\langle h+x\rangle=L.$

Recall that $\varepsilon_{i}$ is the linear function on
$\mathfrak{h}$ given by
$$\varepsilon_{i}(e_{11}-e_{1+j,1+j}-e_{55}+e_{5+j,5+j})=\delta_{ij}$$
for $1\leq i,j\leq 3.$
 All the odd weights and the corresponding odd weight vectors are listed below:
$$
\begin{tabular}{|c|c|c|c|c|}
\multicolumn{5}{c}{ Table 1.3} \\[1pt]
  \hline
  weights& $2\varepsilon_{1}+2\varepsilon_{2}+2\varepsilon_{3}$&$ -2\varepsilon_{1}$ & $-2\varepsilon_{2}$ & $-2\varepsilon_{3}$  \\
  \hline
  vectors &$ e_{15}$ & $e_{26}$ & $e_{37}$&$ e_{48}$ \\
\hline
  weights & $\varepsilon_{1}+\varepsilon_{3}$&$-\varepsilon_{1}-\varepsilon_{2}$& $-\varepsilon_{1}-\varepsilon_{3}$& \\
  \hline
  vectors & $e_{17}+e_{35},$$e_{64}-e_{82}$&
  $e_{27}+e_{36},$$e_{54}-e_{81}$& $e_{28}+e_{46},$$e_{53}-e_{71}$& \\
  \hline
  weights & $\varepsilon_{2}+\varepsilon_{3}$ & $\varepsilon_{1}+\varepsilon_{2}$& $-\varepsilon_{2}-\varepsilon_{3}$&\\
\hline
 vectors
  &$e_{16}+e_{25},$$e_{74}-e_{83}$& $e_{18}+e_{45},$$e_{63}-e_{72}$&
  $e_{38}+e_{47},$$e_{52}-e_{61}$&\\
  \hline
\end{tabular}
\\
$$
Then, by  Table 1.3, one sees that $\langle h+x \rangle$ contains
the following elements
$$ e_{37},\; e_{48},\; e_{16}+e_{25}+e_{74}-e_{83},\;
e_{38}+e_{47}+e_{52}-e_{61},\; e_{28}+e_{46}+e_{53}-e_{71},$$
$$e_{15},\; e_{26},\; e_{17}+e_{35}+e_{64}-e_{82},\;
e_{27}+e_{36}+e_{54}-e_{81},\; e_{18}+e_{45}+e_{63}-e_{72}.$$
 Lie superbrackets of the odd elements
above yield that
\begin{eqnarray*}
&&e_{57}-e_{31}=[e_{37},e_{28}+e_{46}+e_{53}-e_{71}], \,\
e_{58}-e_{41}=[e_{48},e_{27}+e_{36}+e_{54}-e_{81}],\\
&&e_{56}-e_{21}=[e_{26},e_{38}+e_{47}+e_{52}-e_{61}], \,\
e_{12}-e_{65}=[e_{15},e_{38}+e_{47}+e_{52}-e_{61}],\\
&&e_{13}-e_{75}=[e_{15},e_{28}+e_{46}+e_{53}-e_{71}], \,\
e_{23}-e_{76}=[e_{26},e_{18}+e_{45}+e_{63}-e_{72}],\\
&&e_{24}-e_{86}=[e_{26},e_{17}+e_{35}+e_{64}-e_{82}], \,\
e_{14}-e_{85}=[e_{15},e_{27}+e_{36}+e_{54}-e_{81}],\\
&&e_{34}-e_{87}=[e_{37},e_{16}+e_{25}+e_{74}-e_{83}], \,\
e_{67}-e_{32}=[e_{37},e_{18}+e_{45}+e_{63}-e_{72}],\\
&&e_{68}-e_{42}=[e_{48},e_{17}+e_{35}+e_{64}-e_{82}], \,\
e_{78}-e_{43}=[e_{48},e_{16}+e_{25}+e_{74}-e_{83}].
\end{eqnarray*}
So, by Table 1.2 we have $L_{\bar{0}}\subset \langle h+x \rangle.$
 Since
\begin{eqnarray*}&&[e_{17}+e_{35}+e_{64}-e_{82},
e_{78}-e_{43}]=e_{18}+e_{45}-e_{63}+e_{72},\\
&& [e_{27}+e_{36}+e_{54}-e_{81},
e_{78}-e_{43}]=e_{28}+e_{46}-e_{53}+e_{71},\\
&& [e_{18}+e_{45}+e_{63}-e_{72},
e_{86}-e_{24}]=e_{16}+e_{25}-e_{83}+e_{74},\\
&& [e_{28}+e_{46}+e_{53}-e_{71},
e_{67}-e_{32}]=e_{38}+e_{47}-e_{52}+e_{61},\\
&& [e_{16}+e_{25}+e_{74}-e_{83},
e_{67}-e_{32}]=e_{17}+e_{35}-e_{64}+e_{82},\\
&& [e_{38}+e_{47}+e_{52}-e_{61},
e_{86}-e_{24}]=e_{36}+e_{27}-e_{54}+e_{81},
\end{eqnarray*}
 by  a direct computation, one sees that all standard odd basis elements (see Table 1.1) lie in  $L$. Therefore,
 we have $\langle h+x\rangle=L.$
\\

  \noindent \textit{Case 2}. Suppose $L=\mathfrak{gl}(m,n)$
with $m-n\not\equiv 0\pmod p$ or
 $\mathfrak{osp}(m,n)\oplus \mathbb{F}I.$   Let us add a non-zero central element $I$ of $L$
 to $h+x$, where $h\in
\Omega_{\Delta_{\bar{1}}}\cap [L_{\bar{1}},L_{\bar{1}}]$ and $x$ is
the sum of all standard odd weight vectors (basis elements). Assert
that $\langle h+I+x\rangle=L.$ In fact, since
$\mathrm{ad}(h+I)(x)=\mathrm{ad}h(x).$  by the basic fact  F2 and
Lemma \ref{lem-weight-information}(2), we have $L_{\bar{1}}\subset
\langle h+I+x\rangle.$ Furthermore,
$[L_{\bar{1}},L_{\bar{1}}]\subset \langle h+I+x\rangle$ and then
$h\in \langle h+I+x\rangle.$ Thus $I\in \langle h+I+x\rangle$ and
hence
 $\langle
h+I+x\rangle=L.$

Suppose $ L=\mathfrak{sl}(m,n)$ with $m-n\equiv 0\pmod p$. A direct verification shows that
$[L_{\bar{1}},L_{\bar{1}}]=L_{\bar{0}}.$
 Furthermore, by the basic facts F1 and F2 and
Lemma \ref{lem-weight-information}(2), we obtain that $\langle
h+x\rangle=L,$ where $h\in \Omega_{\Delta_{\bar{1}}}$ and $x$ is the
sum of all standard odd weight vectors (basis elements) of $L$.

 Suppose
$L=\mathfrak{gl}(m,n)$ with $m-n\equiv 0\pmod p.$ Then
$L_{\bar{1}}=\mathfrak{sl}(m,n)_{\bar{1}}$ and
$[L_{\bar{1}},L_{\bar{1}}]=\mathfrak{sl}(m,n)_{\bar{0}}$. Choose an
element $e_{11}\in L$ and let $x$ be the sum of all standard odd
weight vectors (basis elements)  and $h\in
\Omega_{\Delta_{\bar{1}}}$. Then $\langle h+x,e_{11}\rangle=L$.

 Suppose  $L={\rm{P}}(m)$ with $m\equiv0 \pmod p.$ Then, the basis elements of the standard Cartan subalgebra of ${\rm{P}}(m)$ with $m\not\equiv0 \pmod
 p$ in Table
  1.1
  and the element $w:=\sum_{i=1}^{m}e_{ii}-\sum_{j=m+1}^{2m}e_{jj}$ constitute a basis of the standard Cartan
  subalgebra of $L$. Let $x$  be the sum of all standard odd weight
vectors (see  Table 1.1) and $h\in \Omega_{\Delta_{\bar{1}}}$. Then,
by the basic facts  F1 and F2 and Lemma
\ref{lem-weight-information}(2), we may deduce  that $\langle
h+x\rangle=L.$

 Suppose $L={\widetilde{\rm{P}}(m)}$ with $m\equiv0 \pmod p.$ By the basic facts F1 and F2 and Lemma \ref{lem-weight-information}(2), we have $\langle h+x\rangle=L$, where $h\in
 \Omega_{\Delta_{\bar{1}}}$ and $x$ is the sum of all standard odd weight
vectors (basis elements).

 Suppose $L={\widetilde{\rm{P}}(m)}\oplus \mathbb{F}I$ with $m\equiv0 \pmod p.$ We have $\langle h+x+I\rangle=L$, where $h\in
 \Omega_{\Delta_{\bar{1}}}$ and $x$ is the sum of all standard odd weight
vectors (basis elements).

Suppose $L={\widetilde{\rm{P}}(m)}$ with $m\not\equiv 0\pmod p.$  We
have $\langle h+x,w\rangle=L$, where $h\in
 \Omega_{\Delta_{\bar{1}}},$ $w=\sum_{i=1}^{m}e_{ii}-\sum_{j=m+1}^{2m}e_{jj}$ and $x$ is the sum of all standard odd weight
vectors (basis elements).

Suppose $L={\widetilde{\rm{P}}(m)}\oplus \mathbb{F}I$ with
$m\not\equiv0\pmod p.$ Then we have $\langle h+x+I,w\rangle=L$,
where $h\in
 \Omega_{\Delta_{\bar{1}}}\cap[L_{\bar{1}},L_{\bar{1}}],$ $w=\sum_{i=1}^{m}e_{ii}-\sum_{j=m+1}^{2m}e_{jj}$ and $x$ is the sum of all standard odd weight
vectors (basis elements). In fact,
 $h+I,$ $x$ and $w$ lie in $\langle h+x+I,w\rangle.$ In view of the basic fact F2 and Lemma \ref{lem-weight-information}(2),
 we obtain that $L_{\bar{1}}\subset\langle h+x+I,w\rangle.$ Then $h\in [L_{\bar{1}},L_{\bar{1}}]\subset\langle h+x+I,w\rangle.$
 This yields $I\in\langle
 h+x+I,w\rangle$ and thereby $\langle h+x+I,w\rangle=L.$
\end{proof}
\section{Lie superalgebras of Cartan type}
 Fix
two $m$-tuples of positive integers $ \underline{t}:=\left(
t_{1},t_{2},\ldots,t_m\right) $ and $\pi:=\left( \pi _1,\pi
_2,\ldots,\pi _m\right)$, where $\pi _i:=p^{t_i}-1.$  Let
$\mathcal{O}(m;\underline{t})$ be the \textit{divided power algebra}
over $\mathbb{F}$ with basis $\{x^{(\alpha)}\mid\alpha\in
\mathbb{A}\}$, where
$\mathbb{A}:=\{\alpha\in\mathbb{N}^{m}\mid\alpha_{i}\leq\pi_{i}\}.$
Let $\Lambda(n)$ be the \textit{exterior superalgebra} over
$\mathbb{F}$ of $n$ variables $ x_{m+1},x_{m+2},\ldots,x_{m+n}.$
Hereafter $\overline{s,t}$ is the set of integers $s,s+1,\ldots,t.$ Let
$$\mathbb{B}=\{\langle i_{1},i_{2},\ldots,i_{k}\rangle \mid m+1\leq
i_{1}<i_{2}<\cdots<i_{k}\leq m+n,  k\in \overline{1, n}\}.$$ For
$u:=\langle i_{1},i_{2},\ldots,i_{k}\rangle \in\mathbb{B},$ set
$|u|=k$ and write $x^{u}=x_{i_{1}}x_{i_{2}} \cdots x_{i_{k}}.$ The
tensor product $\mathcal{O}(m,n;\underline{t}):=\mathcal
{O}(m;\underline{t})\otimes\Lambda(n)$
 is an associative superalgebra. Note that $\mathcal{O}(m,n;\underline{t})$ has a
standard $\mathbb{F}$-basis $\{x^{(\alpha)}x^{u}\mid
(\alpha,u)\in\mathbb{A}\times\mathbb{B}\}.$ For convenience, put
$$\mathbf{Y}_{0}=\overline{1,m};\,\ \mathbf{Y}_{1}=\overline{m+1,m+n}\,\ \mbox{and}\,\
\mathbf{Y}=\mathbf{Y}_{0}\cup \mathbf{Y}_{1}.$$ Let $\partial_{r}$
be the superderivation of $\mathcal{O}(m,n;\underline{t})$ such that
$$\partial_{r}(x^{(\alpha)})=x^{(\alpha-\epsilon_{r})}\,\ \mbox{for}\,\ r\in
\mathbf{Y}_{0}\,\ \mbox{and}\,\ \partial_{r}(x_{s})=\delta_{rs}\,\
\mbox{for}\,\ r,s\in \mathbf{Y}.$$    For $g\in
\mathcal{O}(m;\underline{t}),$ $f\in \Lambda(n),$   write $gf $ for
$ g\otimes f$. For $\epsilon_i:=( \delta_{i1}, \ldots,\delta
_{im}),$ we abbreviate $x^{(\epsilon_i)}$ by $x_i,$ $i=1,\ldots,m.$
For a $\mathbb{Z}_2$-graded vector space $V$, we denote by
$|x|=\theta$ the \textit{parity of a homogeneous element} $x\in
V_{\theta}$, $\theta\in\mathbb{Z}_2$.
 Now we list the eight series of  modular graded Lie superalgebras
of Cartan type
\cite{Fu-Zhang-Jiang,Liu-He,Liu-Yuan-1,Liu-Zhang-Wang,Zhang}, which
are subalgebras of the Lie superalgebra
$\mathrm{Der}\mathcal{O}(m,n;\underline{t})$:
\begin{description}
\item[$\mathbf{W}:$]   The \textit{generalized Witt superalgebra}   is $$
W\left(m,n;\underline{t}\right):=\mathrm{span}_{\mathbb{F}}\{f
\partial_r |  f\in
\mathcal{O}(m,n;\underline{t}),r\in\mathbf{Y}\}.$$
\end{description}
\begin{description}
    \item[$\mathbf{S}:$]
 ~~Let $\mathrm{div}: W(m,n;\underline{t})\longrightarrow
\mathcal{O}(m,n;\underline{t})$ be the  {divergence}, which is the
linear mapping such that
$\mathrm{div}(f\partial_k)=(-1)^{|\partial_k||f|}\partial_k(f)$ for
all $ k\in\mathbf{Y}.$ For $i, j\in \mathbf{Y}$, let
 ${\mathrm{D}_{ij}}(a)=(-1)^{|\partial_i||\partial_j|}\partial_i(a)\partial_j-(-1)^{(|\partial_i|+|\partial_j|)|a|}\partial_j(a)
\partial_i$ for
$a\in\mathcal{O}(m,n;\underline{t}).$

 The \textit{special
superalgebra} is
$$S(m,n;\underline{t}):=\mathrm{span}_{\mathbb{F}}\{{\mathrm{D}_{ij}}(a)\mid
a\in\mathcal{O}(m,n;\underline{t}),\, \, i, j\in \mathbf{Y} \}. $$
\end{description}
Let $m=2r$ or $2r+1$, where $r\in \mathbb{N}$. Put
\[ i'= \left\{\begin{array}{lll} i+r, &\mbox{if}\;
i\in\overline{1, r};
\\i-r, &\mbox{if}\; i\in \overline{r+1, 2r};
\\i, &\mbox{if}\; i\in \mathbf{Y}_1.
\end{array}\right.\quad \sigma(i)= \left\{\begin{array}{lll} 1, &\mbox{if}\;
i\in\overline{1, r};
\\-1, &\mbox{if}\; i\in \overline{r+1, 2r};
\\1, &\mbox{if}\; i\in \mathbf{Y}_1.
\end{array}\right.\]
\begin{description}
  \item[$\mathbf{H}:$]
~Suppose $m=2r$. Let  $\mathrm{D_{H}}(a)=\sum_{i\in
\mathbf{Y}}\sigma(i)(-1)^{|\partial_i||a|}\partial_i(a)\partial_{i'}.$
The \textit{Hamiltonian superalgebra}  $H(m,n;\underline{t})$ is the derived algebra
of  the Lie superalgebra
$$\overline{H}(m,n;\underline{t}):=\mathrm{span}_{\mathbb{F}}\{\mathrm{D_{H}}(a)\mid
a\in\mathcal{O}(m,n;\underline{t})\}. $$
 \end{description}
\begin{description}
  \item
[$\mathbf{K}:$]~~Suppose $m=2r+1$. Then
$(\mathcal{O}(m,n;\underline{t}),[\;,\;]_K)$ is a Lie superalgebra with respect to multiplication
$$ [a,b]_K= \mathrm{D_{K}}(a)\left(b\right)-2\partial_m(a)(b) $$ for
$a,b\in \mathcal{O}(m,n;\underline{t})$, where
$$\mathrm{D_{K}}(a):=\sum_{i\in \mathbf{Y}\backslash\{m\}}(-1)^{|\partial_i||a|}\Big(x_i\partial_m(a)+\sigma(i')\partial_{i'}(a)\Big)\partial_i
+\Big(2a-\sum_{i\in
\mathbf{Y}\backslash\{m\}}x_i\partial_i(a)\Big)\partial_m.$$ The
derived algebra of this Lie superalgebra is  simple,
 called the \textit{contact superalgebra}, denoted by $K(m,n;\underline{t})$.
\end{description}
In the below,  suppose $m>2$ and $n=m$ or $m+1$. Put
\[ i''= \left\{\begin{array}{lll} i+m, &\mbox{if}\;
i\in\mathbf{Y_{0}};
\\i-m, &\mbox{if}\; i\in \overline{m+1, 2m}.
\end{array}\right.\]
   We have the following simple
Lie superalgebras of Cartan type \cite{Liu-He,Liu-Zhang-Wang}.
\begin{description}
  \item
[$\mathbf{HO}:$] Let
 $\mathrm{T_H}(a)=\sum_{i\in  \mathbf{Y}}(-1)^{|\partial_i||a|}\partial_i(a)\partial_{i''}$
for $a\in \mathcal{O}(m,m;\underline{t}).$
 Then $\mathrm{T_H}$ is an odd linear mapping with kernel $\mathbb{F}\cdot 1$ and $\mathbb{Z}$-degree
 $-2.$ The \textit{odd Hamiltonian superalgebra} is
 $$ HO(m,m;\underline{t}):=\mathrm{span}_{\mathbb{F}}\{\mathrm{T_{H}}(a)\mid a\in\mathcal{O}(m,m;\underline{t})\}.$$
\end{description}
\begin{description}
  \item[$\mathbf{SHO}:$] The
\textit{special odd Hamiltonian Lie superalgebra} $SHO(m,m;\underline{t})$ is the second derived superalgebra of the Lie superalgebra
 $S(m,m;\underline{t})\cap HO(m,m;\underline{t})$.
\end{description}
  From  \cite{Fu-Zhang-Jiang,Liu-Yuan-1}  we have the
following  simple Lie superalgebras.
\begin{description}
  \item
[$\mathbf{KO}:$] The \textit{odd contact Lie superalgebra} is
$$KO(m,m+1;\underline{t}):=(\mathcal{O}(m,m+1;\underline{t}),[\;,\;]_{KO})$$
with multiplication
\begin{equation*}
[a,b]_{KO}=
\mathrm{D_{KO}}\left(a\right)\left(b\right)-(-1)^{|a|}2\partial_{2m+1}\left(a\right)b\quad
\mbox{for}\; a,b\in \mathcal{O}(m,m+1;\underline{t}),
\end{equation*}
where
$$\mathrm{D_{KO}}(a):
=\mathrm{T_{H}}(a)+(-1)^{|a|}\partial_{2m+1}(a)\mathfrak{D}+
\big(\mathfrak{D}(a)-2a \big)\partial_{2m+1}$$ and
$\mathfrak{D}:=\sum _{i=1}^{2m}x_{i}\partial_{i}. $
 Note that $\mathfrak{D}$ is just the degree superderivation of
 $\mathcal{O}(m,m;\underline{t})$.
 Here,
 both $\mathrm{T_{H}}$ and $\mathfrak{D}$ are naturally extended to $\mathcal{O}(m,m+1;\underline{t})$.
\end{description}
\begin{description}
  \item
[$\mathbf{SKO}:$]
 Given $\lambda\in \mathbb{F},$ for $a\in \mathcal{O}(m,m+1;\underline{t})$, let
$$
\mathrm{div}_{\lambda}(a)=(-1)^{|a|}2\left(\sum_{i=1}^{m}\partial_i\partial_{i''}
\left(a\right)+\left(\mathfrak{D}-m\lambda\mathrm{id}_{\mathcal{O}(m,m+1;\underline{t})}\right)
\partial_{2m+1}\left(a\right)\right).$$

The kernel of $\mathrm{div}_{\lambda}$ is a subalgebra of
$KO(m,m+1;\underline{t})$. Its second derived algebra is simple,
called the \textit{special odd contact Lie superalgebra}, denoted by $SKO(m,m+1;\underline{t}).$
\end{description}

If
$V=\bigoplus_{r\in\mathbb{Z}}V_r$ is a $\mathbb{Z}$-graded vector
space and $x \in  V$ is a $\mathbb{Z}$-homogeneous element, write
$\mathrm{zd}(x)$  for the $\mathbb{Z}$-degree of $x$. From now on   $X$ denotes one of the simple graded Lie superalgebras
$W$, $S$, $H$, $K$, $HO$, $SHO$, $KO$ or $SKO$.  The
$\mathbb{Z}$-grading of $X$ given by
$\mathrm{zd}(x_i)=-\mathrm{zd}(\partial_i)=a_i$, where
$a_i=1+\delta_{X=K}\delta_{i=m}+\delta_{X=KO}\delta_{i=2m+1}+\delta_{X=SKO}\delta_{i=2m+1}$,
induces a $\mathbb{Z}$-grading  $X=X_{-l}\oplus \cdots \oplus
X_{s}$. Note that $l=1$ for $X=W, S, H,
HO$ or $SHO$ and  $l=2$ for $X=K,KO$ or $SKO.$ Put
$\theta=\sum_{i=1}^mp^{t_i}-m+n.$ Then
\[s = \left\{\begin{array}{lll} \theta-1, &X=W, \, KO;\\
\theta-2,&X=S, \, HO \;\mbox {or}\; SKO\;\mbox{with}\; m\lambda+1\not \equiv 0 \pmod p; \\
\theta-3, &X=H, \,  SKO  \;\mbox{with}\;  m\lambda+1\equiv 0 \pmod p; \\
\theta-5, &X=SHO;\\
\theta+\pi_{m}-2, &X=K  \;\mbox{with}\; n-m-3\not\equiv 0\pmod p; \\
\theta+\pi_{m}-3, &X=K  \;\mbox{with}\; n-m-3\equiv 0 \pmod p.
\end{array}\right.\]
 For $X=W$, $S$, $HO$, $SHO$, $KO$ or $SKO$, the null $X_{0}$ is
isomorphic to a classical modular Lie superalgebra or its
relatives under the canonical isomorphism
$$\phi:W(m,n;\underline{t})_{0}\longrightarrow \mathfrak{gl}(m,n)$$ given by
$x_{i}D_{j}\longmapsto e_{ij},$ where $ i,j\in \mathbf{Y}.$
For $X=H$,
as in \cite[p.164]{Zhang-Liu}, put
$$ \mathfrak{L}(m,n)=\bigg\{\begin{bmatrix}
A&B\\
C&D
\end {bmatrix}\in \frak{gl}(m,n)\;\Big{|}\; A^tG+GA=0, B^tG+C=0, D^t+D=0\bigg\}.$$
There is an isomorphism of algebras:
 $$\varphi:H(m,n;\underline{t})_{0}\longrightarrow
\mathfrak{L}(m,n)$$
 given by $${\rm{D_{H}}}(x_{i}x_{j})\longmapsto
\sigma(j)(-1)^{|\partial_{j}|}e_{ij'}+\sigma(i)\sigma(j)(-1)^{|\partial_{i}||\partial_{j}|+|\partial_{i}|+|\partial_{j}|}e_{ji'}\quad\mbox{for $i,j\in \mathbf{Y}.$}$$
For $K$, there is an isomorphism of algebras:
$$\psi:K(m,n;\underline{t})_{0}\longrightarrow \mathfrak{L}(m-1,n)\oplus
\mathbb{F}I$$  given by ${\rm{D_{K}}}(x_{m})\longmapsto I$ and for
$i,j\in \overline{1,m-1}\cup \mathbf{Y_{1}}$,
\begin{eqnarray*}
{\rm{D_{K}}}(x_{i}x_{j})\longmapsto
\sigma(j)(-1)^{|\partial_{j}|}e_{ij'}+\sigma(i)\sigma(j)(-1)^{|\partial_{i}||\partial_{j}|+|\partial_{i}|+|\partial_{j}|}e_{ji'}.
\end{eqnarray*}  Suppose $\mu\in\mathbb{F}$ and $\mu^2=-1$. Put
$$P_n=\begin{bmatrix}
I_q&\frac{1}{2}I_q\\
-\mu I_q&\frac{\mu}{2}I_q
\end {bmatrix} \;\mbox{if}\; n=2q \;\;\mbox{and}\; \begin{bmatrix}
1&0&0\\0& I_q&\frac{1}{2}I_q\\
0&-\mu I_{q}&\frac{\mu}{2}I_q
\end {bmatrix}\;\mbox{if}\; n=2q+1.$$
 Then
$$\frak{osp}(m,n)=\{P^{-1}EP\mid E\in \frak{L}(m,n)\},\;
 \; P:=\begin{bmatrix}
I_m&0\\
0&P_n
\end {bmatrix}.$$
It follows that $\frak{L}(m,n)\simeq \frak{osp}(m,n).$
 We write down
the following simple facts:
\begin{eqnarray*}&&W(m,n;\underline{t})_{0}\simeq\frak{gl}(m,n),\;\;\;\;
S(m,n;\underline{t})_{0}\simeq \frak{sl}(m,n),\\
&&HO(m,m;\underline{t})_{0}\simeq \widetilde{{\rm{P}}}(m),\;\;\;\;\;
 KO(m,m+1;\underline{t})_{0}\simeq \widetilde{{\rm{P}}}(m)\oplus \mathbb{F}I,\\
 &&SHO(m,m;\underline{t})_{0}\simeq {{\rm{P}}}(m),\;\;
 SKO(m,m+1;\underline{t})_{0}\simeq \widetilde{{\rm{P}}}(m),\\
 &&H(m,n;\underline{t})_{0}\simeq\frak{osp}(m,n),\;\;
K(m,n;\underline{t})_{0} \simeq\frak{osp}(m-1,n)\oplus\mathbb{F}I.
\end{eqnarray*}
For convenience, we list the so-called standard basis of the Cartan
subalgebra $\frak{h}_{X_0} $ of $X_{0}:$
$$\begin{tabular}{|l|l|}
\multicolumn{2}{c}{ Table 2.1} \\[1pt]
\hline
\multicolumn{1}{|c|}{$X$}&\multicolumn{1}{|c|}{ basis of $\frak{h}_{X_0}$}\\
\hline
 $W(m,n;\underline{t})$&
$x_{i}\partial_{i},$ \,\ $i\in \mathbf{Y}$\\
\hline $S(m,n;\underline{t})$&$-\mathrm{D}_{1i}(x_{1}x_{i}),$
$\mathrm{D}_{1j}(x_{1}x_{j}),$
 \,\ $ i\in \overline{2,m},$ $j\in \mathbf{Y}$\\
\hline
$H(2r,n;\underline{t})$&$\mathrm{D_{H}}(x_{i}x_{i'}+x_{j}x_{j+\lfloor
\frac{n}{2}\rfloor}),$
 \,\ $i\in \overline{1,r},$ $j\in \overline{2r+1,2r+\lfloor\frac{n}{2}\rfloor}$\\
\hline
$K(2r+1,n;\underline{t})$&$x_{i}x_{i'}+x_{j}x_{j+\lfloor
\frac{n}{2}\rfloor},$
 $x_{m},$ \,\
$i\in \overline{1,r},$ $j\in \overline{2r+2, 2r+1+\lfloor\frac{n}{2}\rfloor}$\\
\hline $HO(m,m;\underline{t})$&$\mathrm{T_{H}}(x_{i}x_{i'}),$ \,\
$i\in \mathbf{Y_{0}}$ \\
\hline
$SHO(m,m;\underline{t})$&$\mathrm{T_{H}}(x_{1}x_{1'}-x_{i}x_{i'}),$
\,\
$i\in \overline{2,m}$ \\
\hline $KO(m,m+1;\underline{t})$&$x_{i}x_{i'},$ $x_{2m+1},$ \,\
$i\in \mathbf{Y_{0}}$ \\
\hline $SKO(m,m+1;\underline{t})$&$x_{2m+1}+m\lambda x_{i}x_{i'},$
\,\
$i\in \mathbf{Y_{0}}$ \\
\hline
\end{tabular}\\
~~~
$$
The weight decomposition of the component $X_k$ relative to
  the standard Cartan subalgebra $\mathfrak{h}_{X_0}$ is:
$$X_{k}=\delta_{k,0}\mathfrak{h}_{X_0}\oplus_{\alpha\in
\Delta_{k}}X_{k}^{\alpha},\,\ \mbox{where}\,\ k\in
\overline{-l,s}.$$
 Write $(\Delta_{i})_{\bar{1}}$ for the subset  of  odd
 weights
 in $\Delta_{i},$ $i=-1,0,s-1$ or $s.$
\begin{lemma}\label{root information}
Let $X$ be one of eight series of Lie superalgebras of Cartan type.
If
$$\mbox{$X=HO(m,m;\underline{t})$ or $KO(m,m+1;\underline{t})$,}$$
where  $m$ is odd, then there exist  odd
weights $\alpha_{-1}\in (\Delta_{-1})_{\bar{1}} $ and
$\alpha_{s-1}\in (\Delta_{s-1})_{\bar{1}}$ such that $\alpha_{-1}$,
$\alpha_{s-1}$ and any $\alpha\in(\Delta_{0})_{\bar{1}}$ are
pairwise distinct.

Otherwise, there exist  odd
weights $\alpha_{-1}\in (\Delta_{-1})_{\bar{1}} $ and $\alpha_{s}\in
(\Delta_{s})_{\bar{1}}$ such that $\alpha_{-1}$, $\alpha_{s}$ and
any $\alpha\in(\Delta_{0})_{\bar{1}}$ are pairwise distinct.
\end{lemma}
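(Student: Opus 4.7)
\bigskip

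\noindent\textbf{Proof proposal.} The plan is to verify the lemma by a case-by-case computation across the eight families $W, S, H, K, HO, SHO, KO, SKO$. For each $X$, I would start by fixing the standard Cartan subalgebra $\mathfrak{h}_{X_0}$ from Table 2.1 and introducing the dual functionals: for the $W, S$ cases the dual basis $\{\varepsilon_i : i\in\mathbf{Y}\}$ with $\varepsilon_i(x_j\partial_j)=\delta_{ij}$; for the $H, K$ cases, pairing via the involution $i\mapsto i'$; for the $HO, SHO, KO, SKO$ cases, pairing via $i\mapsto i''$. Then every weight vector of a standard monomial $x^{(\alpha)}x^u\partial_r$ (or its Cartan-type analogue) has weight $\sum_i\alpha_i\varepsilon_i+\sum_{j\in u}\varepsilon_j-\varepsilon_r$, read off by a short bracket computation, and the parity is determined by $|u|+|\partial_r|\pmod 2$.

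With these dictionaries in place, the odd weights of $\Delta_0$ form a completely explicit finite list (coming from monomials $x_ix_j\partial_r$-type elements of mixed parity), while $(\Delta_{-1})_{\bar 1}$ consists of weights arising from the odd members of the ``$\partial$'' basis, and $(\Delta_s)_{\bar 1}$ from the top-degree monomials of odd parity. The strategy is then to pick $\alpha_{-1}$ to be the weight of a single odd partial $\partial_r$ ($r\in \mathbf{Y}_1$) and $\alpha_s$ to be the weight of a top-degree odd monomial. Since $\alpha_{-1}$ is a single negative $\varepsilon$ and $\alpha_s$ involves the full packet $\sum_i\pi_i\varepsilon_i+\sum_{j\in\mathbf{Y}_1}\varepsilon_j$ minus a single $\varepsilon_r$, it is clear by inspection that these cannot equal any odd weight in $(\Delta_0)_{\bar 1}$ (which always has $\mathbb{Z}$-sum of $\varepsilon_i$-coefficients equal to $0$), nor equal each other generically. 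A small amount of care is needed to confirm that one can also arrange $\alpha_{-1}\neq \alpha_s$ modulo the relation $p^{t_i}\equiv 0$ that governs coefficients in the divided-power setting, and to choose the parity of $r$ so that both vectors are indeed odd (this is where the $K$-type and $SKO$-type shifts from Table 2.1 and the two regimes $n-m-3\equiv 0\pmod p$, $m\lambda+1\equiv 0\pmod p$ need attention).

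The main obstacle, and the reason for the two exceptional cases, occurs for $HO(m,m;\underline t)$ and $KO(m,m+1;\underline t)$ with $m$ odd. Here the top degree $s$ is forced to have an even value of $|u|+|\partial_r|$ along the full packet, so every element of $X_s$ turns out to be \emph{even}, making $(\Delta_s)_{\bar 1}=\emptyset$; one cannot pick $\alpha_s$ at all. I would resolve this by stepping one level down: in degree $s-1$ there are genuine odd weight vectors (drop one exterior factor or alter the partial), and the weight $\alpha_{s-1}$ they carry still has all $\varepsilon_i$-coefficients nearly maximal, so it avoids every weight in $(\Delta_0)_{\bar 1}\cup\{\alpha_{-1}\}$ by the same sum-of-coefficients argument as before.

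The hard part, I expect, is the bookkeeping for the contact-type algebras $K, KO, SKO$: the nonstandard $\mathbb Z$-grading (with $\mathrm{zd}(x_m)=2$ or $\mathrm{zd}(x_{2m+1})=2$) changes the formula for $s$ and introduces the extra central basis element of $\mathfrak h_{X_0}$ (namely $x_m$ or $x_{2m+1}$, resp.\ $x_{2m+1}+m\lambda x_ix_{i'}$), whose dual functional needs to be tracked carefully so that the distinctness assertions really hold modulo $p$. Once those contact-type weight tables are written out, the remaining six families $W, S, H, HO, SHO, SKO$ follow by a direct and shorter inspection of the corresponding weight lists.
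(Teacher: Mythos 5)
Your proposal follows essentially the same route as the paper: a case-by-case computation of the odd weights of $X_{-1}$, $X_{0}$ and $X_{s}$ (or $X_{s-1}$) relative to the standard Cartan subalgebra via dual functionals, followed by inspection of the resulting lists to select distinct $\alpha_{-1}$ and $\alpha_{s}$; in particular your diagnosis of the exceptional cases --- that for $HO(m,m;\underline{t})$ and $KO(m,m+1;\underline{t})$ with $m$ odd the top component $X_{s}$ is purely even, so one must descend to degree $s-1$ --- is exactly what the paper's weight tables show. One caveat: your shortcut that every weight in $(\Delta_{0})_{\bar{1}}$ has $\varepsilon$-coefficient sum $0$ is correct for the $\mathfrak{gl}$, $\mathfrak{sl}$, $\mathfrak{osp}$-type nulls ($W$, $S$, $H$, $K$) but fails for the $\widetilde{\mathrm{P}}(m)$-type nulls of $HO$, $SHO$, $KO$, $SKO$, whose odd degree-zero weights have the form $2\mu_{l}$ and $\pm(\mu_{i}+\mu_{j})$ with coefficient sum $\pm 2$; the required distinctness still holds in those cases, but it has to be read off from the explicit lists (as the paper does) rather than from that sum-of-coefficients criterion.
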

\begin{proof}
For $W(m,n;\underline{t}),$ let $\zeta_{i}$ be   the linear function
on $\mathfrak{h}_{W_0}$ given by
$$\zeta_{i}(x_{j}\partial_j)=\delta_{ij}, \,\  i,j\in \mathbf{Y}.$$ We have
\begin{eqnarray*}
&&(\Delta_{-1})_{\bar{1}}=\{-\zeta_{k}\mid  k\in \mathbf{Y_{1}}
\};\\
&&(\Delta_{0})_{\bar{1}}=\{\pm(\zeta_{k}-\zeta_{l})\mid k\in \mathbf{Y_{0}}, l\in \mathbf{Y_{1}}\};\\
&& (\Delta_{s})_{\bar{1}}=\left\{\begin{array}{ll}\Big\{-\sum_{l\in
\mathbf{Y}_{0}}\zeta_{l}+\sum_{k\in
\mathbf{Y}_{1}}\zeta_{k}-\zeta_{m} \;\big{|} \;m\in
\mathbf{Y_{0}}\Big\}\; & \mbox{if}\; n\; \mbox{is odd},\\
\Big\{-\sum_{l\in \mathbf{Y}_{0}}\zeta_{l}+\sum_{k\in
\mathbf{Y}_{1}}\zeta_{k}-\zeta_{n} \;\big{|} \;  n\in
\mathbf{Y_{1}}\Big\}\; & \mbox{if}\; n\; \mbox{is
even}.\end{array}\right.
\end{eqnarray*}

For $S(m,n;\underline{t}),$ let $\eta_{i}$ be   the linear function
on $\mathfrak{h}_{S_0}$ given by
$$\eta_{i}(x_1\partial_1-x_j\partial_j)=\delta_{ij}, \;
i,j\in \overline{2,m+n}$$ and write
$\eta_{1}=\sum_{l=2}^{m+n}\eta_{l}.$ We have
\begin{eqnarray*}
&&(\Delta_{-1})_{\bar{1}}=\{\eta_{k}\mid k\in \mathbf{Y_{1}}\};\\
&&(\Delta_{0})_{\bar{1}}=\{\pm(\eta_{k}-\eta_{l})\mid k\in
\mathbf{Y_{0}}, l\in \mathbf{Y_{1}}\};\\
&&(\Delta_{s})_{\bar{1}}=\Big\{-\sum_{j\in
\mathbf{Y}_{0}}\eta_{j}+\sum_{l\in
\mathbf{Y}_{1}}\eta_{l}-\eta_{i}-\eta_{k}\;\Big | \;i+k+n \;\mbox{is
 an odd number}\Big\}.
\end{eqnarray*}
For $H(2r,2q;\underline{t}),$ let $\vartheta_{i}$ be   the linear
function on $\mathfrak{h}_{H_0}$ given by
$$\vartheta_{i}(\mathrm{D_{H}}(x_{j}x_{j'}+x_{k}x_{k+q}))=\delta_{ij}+\delta_{ik},  i, j\in
\overline{1,r}; i,k\in \overline{2r+1,2r+q}.$$ We have
\begin{eqnarray*}
&&(\Delta_{-1})_{\bar{1}}=\{\pm\vartheta_{l}\mid  l\in
\overline{2r+1,
2r+q}\};\\
&&(\Delta_{0})_{\bar{1}}=\{\pm(\vartheta_{i}-\vartheta_{j})\mid
i\in \overline{1,r},\;j\in \overline{2r+1,2r+q}\};\\
&&(\Delta_{s})_{\bar{1}}=\{\pm\vartheta_{j}\mid j\in
\overline{1,r}\}.
\end{eqnarray*}
For $H(2r,2q+1;\underline{t}),$ let $\iota_{i}$ be   the linear
function on $\mathfrak{h}_{H_0}$ given by
$$\iota_{i}(\mathrm{D_{H}}(x_{j}x_{j'}+x_{k}x_{k+q}))=\delta_{ij}+\delta_{ik}, i, j\in
\overline{1,r};
i,k\in \overline{2r+1,2r+q}.$$ We have
\begin{eqnarray*}
&&(\Delta_{-1})_{\bar{1}}=\{\pm\iota_{l}\mid l\in \overline{2r+1,
2r+q}\}\cup \{0\};\\
&&(\Delta_{0})_{\bar{1}}=\{\pm(\iota_{i}-\iota_{j}),\pm\iota_{k}\mid
i\in \overline{1,r},\;j,k\in \overline{2r+1,2r+q}\};\\
&&(\Delta_{s})_{\bar{1}}=\{\pm\iota_{j}\mid j\in \overline{1,
r}\}\cup\{0\}.
\end{eqnarray*}
For $K(2r+1,2q;\underline{t}),$ let $\kappa_{i}$ be   the linear
function on $\mathfrak{h}_{K_0}$ given by
$$\kappa_{i}(x_{j}x_{j'}+x_{k}x_{k+q})=\delta_{ij}+\delta_{ik}, i, j\in \overline{1,
r};
i,k\in \overline{2r+1,2r+q}$$ and $\kappa_{m}(x_{m})=1.$ We have
\begin{eqnarray*}
&&(\Delta_{-1})_{\bar{1}}=\{\pm\kappa_{l}\mid l\in\overline{2r+2,
2r+q+1}\};\\
&&(\Delta_{0})_{\bar{1}}=\{\pm(\kappa_{i}-\kappa_{j})\mid
i\in \overline{1, r},j\in \overline{2r+2,2r+q+1}\};\\
&&(\Delta_{s})_{\bar{1}}=\left\{\begin{array}{ll}\{-4\kappa_{m}\}\;\;\;\;\;~~~~~~~~~~~~~~~~~~~~~~~~~~~\;\mbox{if}\;n-m-3\not\equiv 0 \pmod p,\\
\{-2\kappa_{j}-5\kappa_m\mid j\in \overline{1,r}\cup \overline{2r+2,
2r+q+1}\}\cup\{0\}\;\mbox{otherwise}.\end{array}\right.
\end{eqnarray*}
For $K(2r+1,2q+1;\underline{t}),$ let $\lambda_{i}$ be   the linear
function on $\mathfrak{h}_{K_0}$ given by
$$\lambda_{i}(\mathrm{D_{H}}(x_{j}x_{j'}+x_{k}x_{k+q}))=\delta_{ij}+\delta_{ik}, i, j\in
\overline{1,r};
i,k\in \overline{2r+1,2r+q}$$ and $\lambda_{m}(x_{m})=1.$ We have
\begin{eqnarray*}
&&(\Delta_{-1})_{\bar{1}}=\{\pm\lambda_{l}\mid l\in \overline{2r+2,
2r+q+1}\}\cup \{0\};\\
&&(\Delta_{0})_{\bar{1}}=\{\pm(\lambda_{i}-\lambda_{j}),\lambda_{k}\mid
i\in \overline{1,r},j,k\in\overline{2r+2,2r+q+1}\};\\
 &&(\Delta_{s})_{\bar{1}}=\left\{\begin{array}{ll}\{-4\lambda_{m}\}\;\;\;\;\;~~~~~~~~~~~~~~~~~~~~\mbox{if}\;n-m-3\not\equiv 0 \pmod p,\\
\{-2\lambda_{j}-5\lambda_m\mid j\in \overline{1, r}\cup
\overline{2r+2, 2r+q+1}\}\;\mbox{otherwise}.\end{array}\right.
\end{eqnarray*}
For $HO(m,m;\underline{t}),$ let $\mu_{i}$ be   the linear function
on $\mathfrak{h}_{HO_0}$ given by
$$\mu_{i}\big(\mathrm{T_{H}}(x_{j}x_{j'})\big)=\delta_{ij},\;
i,j\in \mathbf{Y_{0}}.$$ We have
\begin{eqnarray*}
&&(\Delta_{-1})_{\bar{1}}=\{\mu_{l}\mid l\in \mathbf{Y_{0}}\};\\
&&(\Delta_{0})_{\bar{1}}=\{2\mu_{l},\pm(\mu_{i}+\mu_{j})\mid
 i, j,l\in \mathbf{Y_{0}},i\neq j\};\\
 &&(\Delta_{s-1})_{\bar{1}}=\{-\mu_{l}\mid l\in
 \mathbf{Y_{0}}\}\cup\{0\}\;\;
 \mbox{if $m$ is odd,}\\
&&(\Delta_{s})_{\bar{1}}=\Big\{-2\sum_{i=1}^{m}\mu_{i}\Big\}\;\;
\mbox{if $m$ is even}.
\end{eqnarray*}
For $SHO(m,m;\underline{t}),$ let $\nu_{i}$ be   the linear function
on $\mathfrak{h}_{SHO_0}$ given by
$$\nu_{i}\big(\mathrm{T_{H}}(x_{1}x_{1'}-x_{j}x_{j'})\big)=\delta_{ij},\;i,j\in \overline{2,m}$$ and write $\nu_{1}=\sum_{l=2}^{m}\nu_{l}.$ We have
\begin{eqnarray*}
&&(\Delta_{-1})_{\bar{1}}=\{\nu_{i}\mid i\in \mathbf{Y_{0}}\};\\
&&(\Delta_{0})_{\bar{1}}=\{2\nu_{l},\pm(\nu_{i}+\nu_{j})\mid
i,j,l\in \mathbf{Y_{0}},i\neq j\};\\
&&(\Delta_{s})_{\bar{1}}=\{\nu_{i}\mid i\in \mathbf{Y_{0}}\}.
\end{eqnarray*}
For $KO(m,m+1;\underline{t}),$  let $\xi_{i}$ be   the linear
function on $\mathfrak{h}_{KO_0}$ given by
$$\xi_{i}(x_{1}x_{1'}-x_{j}x_{j'})=\delta_{ij},\;
i,j\in \overline{2, m}$$ and $\xi_{2m+1}(x_{2m+1})=1.$ We have
\begin{eqnarray*}
&&(\Delta_{-1})_{\bar{1}}=\{\xi_{j}+\xi_{2m+1}\mid
j\in \mathbf{Y_{0}}\};\\
&&(\Delta_{0})_{\bar{1}}=\{2\xi_{l},\pm(\xi_{i}+\xi_{j})\mid
i, j,l\in \mathbf{Y_{0}},i\neq j\};\\
&&(\Delta_{s-1})_{\bar{1}}=\{-\xi_{j}-2\xi_{2m+1}\mid j\in \mathbf{Y_{0}}\}\;\; \mbox{if $m$ is odd},\\
&&(\Delta_{s})_{\bar{1}}=\Big\{-2\big(\sum_{i=2}^{m}\xi_{i}+\xi_{2m+1}\big)-2\xi_{2m+1}\Big\}\;\;
\mbox{if $m$ is even}.
\end{eqnarray*}
For $SKO(m,m+1;\underline{t}),$  let $\omega_{i}$ be   the linear
function on $\mathfrak{h}_{SKO_0}$ given by
$$\omega_{i}(x_{1}x_{1'}-x_{j}x_{j'})=\delta_{ij},\;
i,j\in \overline{2, m}$$ and $\omega_{2m+1}(x_{2m+1})=1.$ We have
\begin{eqnarray*}
&&(\Delta_{-1})_{\bar{1}}=\Big\{\sum_{i=2}^{m}\omega_{i}+\omega_{2m+1}+\omega_{l}\;\Big{|}\; l\in \mathbf{Y_{0}}\Big\};\\
&&(\Delta_{0})_{\bar{1}}=\{2\omega_{l},\pm(\omega_{i}+\omega_{j})\mid
 i, j,l\in \mathbf{Y_{0}},i\neq j\};\\
&&(\Delta_{s})_{\bar{1}}=\left\{\begin{array}{ll}\big\{2\big(\sum_{i=2}^{m}\omega_{i}+\omega_{2m+1}\big)\big\}\;\;~\mbox{if}\;m\lambda+1\not\equiv
0 \pmod p,\\
\big\{2\big(\sum_{i=2}^{m}\omega_{i}+\omega_{2m+1}\big)-\omega_{k}\;\big{|}\;
k\in \mathbf{Y_{0}}\big\}\;\;\;\mbox{otherwise}.\end{array}\right.
\end{eqnarray*}
Summarizing, if $X=HO(m,m;\underline{t})$ with $ m$  odd or
$KO(m,m+1;\underline{t})$ with $m$ odd, we may easily find the
desired weights $\alpha_{-1}\in (\Delta_{-1})_{\bar{1}} $ and
$\alpha_{s-1}\in (\Delta_{s-1})_{\bar{1}}$ such that $\alpha_{-1}$,
$\alpha_{s-1}$ and any $\alpha\in(\Delta_{0})_{\bar{1}}$ are
pairwise distinct. Otherwise, we may easily find the desired weights
$\alpha_{-1}\in (\Delta_{-1})_{\bar{1}} $ and $\alpha_{s}\in
(\Delta_{s})_{\bar{1}}$ such that $\alpha_{-1}$, $\alpha_{s}$ and
any $\alpha\in(\Delta_{0})_{\bar{1}}$ are pairwise distinct.
\end{proof}

As in Lie algebra case  (see \cite[Propositions 3.3.5 and
3.3.6]{Strade-Farnsteiner}), we have
\begin{lemma}\label{lem-Cartan-component1}  Let $\frak{g}=\oplus_{i=-r}^{s} \frak{g}_{i}$ be a finite-dimensional simple Lie superalgebra.
Then the following statements hold.
 \begin{itemize}
\item[$\mathrm{(1)}$] $\frak{g}_{0}$-modules $\frak{g}_{-1}$ and
$\frak{g}_{s}$ are  irreducible.

\item[$\mathrm{(2)}$]  $\frak{g}$ can be generated by
$\frak{g}_{-1}$ and $\frak{g}_{s}.$
\end{itemize}
\end{lemma}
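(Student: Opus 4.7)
The plan is to prove (1) first by the standard ideal-generation argument, and then derive (2) from (1) together with simplicity. The approach parallels the Lie-algebra version \cite[Propositions~3.3.5 and 3.3.6]{Strade-Farnsteiner}, modified to accommodate the $\mathbb{Z}_2$-grading.

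For (1), suppose $V\subseteq\mathfrak{g}_{-1}$ is a nonzero proper $\mathfrak{g}_0$-submodule and let $I=\langle V\rangle_{\mathfrak{g}}$ be the graded ideal it generates. Since $V\neq 0$ and $\mathfrak{g}$ is simple, $I=\mathfrak{g}$, so in particular $I\cap\mathfrak{g}_{-1}=\mathfrak{g}_{-1}$. I would then derive a contradiction by showing $I\cap\mathfrak{g}_{-1}=V$. Every element of $I$ is a sum of iterated brackets $[y_1,[y_2,\ldots,[y_n,v]\ldots]]$ with $v\in V$ and $y_j$ homogeneous in $\mathfrak{g}$; restricting to degree $-1$ forces $\sum_{j}\mathrm{zd}(y_j)=0$. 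Applying the graded Jacobi identity repeatedly together with $[\mathfrak{g}_0,V]\subseteq V$, each such bracket can be reduced to an element of $V$, so $V=\mathfrak{g}_{-1}$, a contradiction. The irreducibility of $\mathfrak{g}_s$ follows dually: a proper $\mathfrak{g}_0$-submodule $W\subseteq\mathfrak{g}_s$ would generate a proper ideal, with the vanishing of $\mathfrak{g}_k$ for $k>s$ truncating the bracket analysis.

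For (2), let $\mathfrak{h}=\langle\mathfrak{g}_{-1},\mathfrak{g}_s\rangle$, a graded subalgebra of $\mathfrak{g}$. By simplicity, $[\mathfrak{g}_{-1},\mathfrak{g}_s]\neq 0$, for otherwise $\mathfrak{g}_s$ would generate a proper ideal; moreover this bracket is a $\mathfrak{g}_0$-submodule of $\mathfrak{g}_{s-1}$. Iterating $\mathrm{ad}\,\mathfrak{g}_{-1}$ on $\mathfrak{g}_s$ produces nonzero $\mathfrak{g}_0$-submodules in each $\mathfrak{g}_k$ for $0\leq k\leq s-1$; a degree-by-degree extension of the ideal argument in~(1) identifies each with the whole $\mathfrak{g}_k$. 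The negative-degree components $\mathfrak{g}_{-k}$ for $k\geq 2$ are recovered symmetrically from $\mathfrak{g}_{-1}\subseteq\mathfrak{h}$ via iterated self-brackets. Hence $\mathfrak{h}=\mathfrak{g}$.

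The principal technical obstacle lies in the Jacobi-reduction step in~(1), particularly in the depth-$2$ cases $X=K,KO,SKO$, where $[\mathfrak{g}_{-1},\mathfrak{g}_{-1}]\subseteq\mathfrak{g}_{-2}$ may be nonzero and introduces auxiliary $\mathfrak{g}_{-2}$-terms during bracket manipulation. These terms must be tracked carefully and shown to contribute only elements of $V$ after full reduction; this is routine but delicate book-keeping, and carries the bulk of the work.
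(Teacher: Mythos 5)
Your argument for part (1) contains a genuine gap, and in fact no argument from simplicity alone can work. The critical claim is that the ideal $I$ generated by a nonzero $\mathfrak{g}_0$-submodule $V\subseteq\mathfrak{g}_{-1}$ satisfies $I\cap\mathfrak{g}_{-1}=V$. Run your Jacobi reduction on the first nontrivial degree-$(-1)$ bracket: for $y\in\mathfrak{g}_{1}$, $y'\in\mathfrak{g}_{-1}$ and $v\in V$,
\begin{equation*}
[y,[y',v]]=[[y,y'],v]\pm[y',[y,v]].
\end{equation*}
The term $[[y,y'],v]$ lies in $[\mathfrak{g}_0,V]\subseteq V$, but the term $[y',[y,v]]$ is the bracket of an \emph{arbitrary} $y'\in\mathfrak{g}_{-1}$ with $[y,v]\in\mathfrak{g}_0$; it lies in $[\mathfrak{g}_0,\mathfrak{g}_{-1}]\subseteq\mathfrak{g}_{-1}$ but there is no reason for it to lie in $V$, since $V$ is only assumed stable under the action of $\mathfrak{g}_0$ on $V$ itself. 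So the reduction does not close up in $V$, and it should not: take $\mathfrak{g}=\mathfrak{sl}_3$ (a Lie superalgebra with trivial odd part) with its principal grading, so that $\mathfrak{g}_0$ is the Cartan subalgebra and $\mathfrak{g}_{-1}$ is spanned by the two simple negative root vectors. Then $\mathfrak{g}_{-1}$ is the direct sum of two one-dimensional $\mathfrak{g}_0$-submodules and is not irreducible, although $\mathfrak{g}$ is simple and graded. Your argument would "prove" irreducibility here too, so it cannot be correct; the same objection undermines the "degree-by-degree extension of the ideal argument" on which your part (2) relies, as well as the unproved assertions there that $[\mathfrak{g}_{-1},\mathfrak{g}_s]=0$ would force the ideal generated by $\mathfrak{g}_s$ to be proper (brackets with $\mathfrak{g}_{-2}$ are not excluded) and that the components $\mathfrak{g}_{-k}$, $k\geq2$, are iterated brackets of $\mathfrak{g}_{-1}$ (this is a property of the specific gradings, not of simplicity).

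The paper itself offers no argument: it simply invokes Propositions 3.3.5 and 3.3.6 of \cite{Strade-Farnsteiner}, which concern graded Lie algebras satisfying additional structural hypotheses beyond simplicity --- in particular transitivity (for $x\in\mathfrak{g}_i$ with $i\geq0$, $[x,\mathfrak{g}_{-1}]=0$ implies $x=0$) and properties of the standard Cartan-type grading. To make your proof work you would have to import those hypotheses (or, for the algebras actually used in Theorem 2.4, verify irreducibility of $\mathfrak{g}_{-1}$ and $\mathfrak{g}_s$ directly as modules over the classical null $\mathfrak{g}_0$ using the explicit descriptions in Sections 1--2, and verify generation by $\mathfrak{g}_{-1}\oplus\mathfrak{g}_s$ case by case). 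As written, the lemma is being asserted at a level of generality at which it is false, and your proof inherits that defect rather than repairing it.
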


\begin{theorem}\label{th-main-cartan}
Let $X$ be one of eight series of Lie superalgebras of Cartan type.
If
\[ X= \left\{\begin{array}{lll} W(m,n;\underline{t})&\; \mbox{with}\; m-n\equiv0\pmod p,\\
HO(m,m;\underline{t})&\;\mbox{with}\; m\not\equiv0\pmod p, \\
KO(m,m+1;\underline{t}) &\;\mbox{with}\; m\not\equiv0\pmod
p,\\SKO(m,m+1;\underline{t}) &\;\mbox{with}\; m\not\equiv0\pmod p,
\end{array}\right.\]   then $X$ can
be generated by 2 elements.
Otherwise, $X$ can be generated by 1 element.
\end{theorem}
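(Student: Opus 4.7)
The plan is to mimic the one-generator construction of Proposition \ref{generators of null} at the level of the null $X_0$, then leverage the graded nature of sub-Lie-superalgebras together with the irreducibility and generation statements of Lemma \ref{lem-Cartan-component1} to push the generating element up to all of $X$. In the generic case (one generator), I would form
\[ g \;=\; h + x_0 + z_{-1} + z_s, \]
where $h\in\mathfrak{h}_{X_0}$ is a generic Cartan element, $x_0$ is the sum of all standard odd weight vectors of $X_0$ chosen so that $h+x_0$ is a one-element generator of $X_0$ by Proposition \ref{generators of null}, and $z_{-1}\in X_{-1}^{\alpha_{-1}}$, $z_s\in X_s^{\alpha_s}$ are nonzero odd weight vectors for the weights $\alpha_{-1},\alpha_s$ supplied by Lemma \ref{root information}.

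Since a sub-Lie-superalgebra is $\mathbb{Z}_2$-graded, the even part $h$ and the odd part $y := x_0+z_{-1}+z_s$ of $g$ both lie in $\langle g\rangle$. Fact F1, applied to the finite family $\mathfrak{F} = (\Delta_0)_{\bar{1}}\cup\{\alpha_{-1},\alpha_s\}$ that Lemma \ref{root information} singles out, lets me choose $h\in\Omega_{\mathfrak{F}}$, which makes the eigenvalues of $\mathrm{ad}(h)$ on the weight summands of $y$ pairwise distinct. Fact F2 then places each of $x_0$, $z_{-1}$, $z_s$ individually into $\langle g\rangle$. In particular, $h+x_0\in\langle g\rangle$ forces $X_0\subset\langle g\rangle$ by Proposition \ref{generators of null}; the nonzero $z_{-1}$ and $z_s$, together with the irreducibility of $X_{-1}$ and $X_s$ as $X_0$-modules (Lemma \ref{lem-Cartan-component1}(1)), give $X_{-1}, X_s\subset\langle g\rangle$; and Lemma \ref{lem-Cartan-component1}(2) closes the argument, yielding $\langle g\rangle = X$.

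For the four exceptional cases in the theorem, $X_0$ itself requires two generators by Proposition \ref{generators of null}, so I would supplement $g$ with a second auxiliary element of the same type used there (for instance $e_{11}$ for $W$ with $m-n\equiv 0\pmod p$, the central element $I$, or $w=\sum_{i=1}^{m} e_{ii}-\sum_{j=m+1}^{2m} e_{jj}$ for the $\widetilde{\mathrm{P}}(m)$-flavored nulls arising in $HO$, $KO$ and $SKO$), and rerun the extraction. The principal obstacle I anticipate is the subcase $X=HO(m,m;\underline{t})$ or $KO(m,m+1;\underline{t})$ with $m$ odd: here $(\Delta_s)_{\bar{1}}$ is too small (possibly empty) and Lemma \ref{root information} only supplies a weight $\alpha_{s-1}$ in $\Delta_{s-1}$ rather than in $\Delta_s$. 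In these subcases I would replace $z_s$ by $z_{s-1}\in X_{s-1}^{\alpha_{s-1}}$; the weight-separation argument still deposits $z_{s-1}$ in $\langle g\rangle$, but the final step is no longer a direct invocation of Lemma \ref{lem-Cartan-component1}(2). I would close this gap by exploiting the transitivity of the $\mathbb{Z}$-grading together with the irreducibility of $X_s$ as an $X_0$-module, producing $X_s\subset\langle g\rangle$ through a finite sequence of brackets taken inside $X_0\oplus X_{-1}\oplus X_{s-1}$.
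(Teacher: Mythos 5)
Your proposal is correct and follows essentially the same route as the paper: the same generator $h+x_0+z_{-1}+z_s$ built from a generic Cartan element separating the odd weights of Lemma \ref{root information}, the same use of F1/F2 to extract $x_0$, $z_{-1}$, $z_s$, the same appeal to Lemma \ref{lem-Cartan-component1}, and the same transitivity workaround via $X_{s-1}$ for $HO$ and $KO$ with $m$ odd. The only (cosmetic) difference is that in the two-generator cases the paper attaches $z_{-1}$ and $z_s$ to the auxiliary generator $w_X$ rather than to $h+x_0$.
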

\begin{proof}
Recall that the null $X_0$ is isomorphic to one of the Lie
superalgebras
$$ \mbox{$\mathfrak{gl}(m,n),$
$\mathfrak{sl}(m,n),$ $\mathfrak{osp}(m,n),$
$\mathfrak{osp}(m-1,n)\oplus \mathbb{F}I,$ $\widetilde{\rm{P}}(m),$
$\widetilde{\rm{P}}(m)\oplus \mathbb{F}I$ or P$(m).$}$$ Let us prove
the first conclusion. From the proof of Proposition \ref{generators
of null}, $X_{0}$ can be generated by 2 elements. Write $h_{X}$ for
one of two generators which is associated with $h$ (see Proposition
\ref{generators of null},  $h+x$ for $W_{0}$, $h+x$ for $HO_{0}$,
$h+x$ for $SKO_{0}$, $h+x+I$ for $KO_{0}$) and $w_{X}$ ($e_{11}$ for
$W_{0}$, $w$ for $HO_{0}$, $w$ for $KO_{0}$, $w$ for $KO_{0}$) for
the other generator for $X_{0}$.

For $X\neq HO(m,m;\underline{t})$ and $KO(m,m+1;\underline{t})$ with
$m$  odd,  by Lemma \ref{root information}, we choose
 weights $\alpha_{-1}\in (\Delta_{-1})_{\bar{1}}$ and
$\alpha_{s}\in (\Delta_{s})_{\bar{1}}$ such that
$\alpha_{-1},\alpha_{s}$ and any $\alpha\in (\Delta_{0})_{\bar{1}}$
are pairwise distinct. Let
$\Phi=\{\alpha_{-1}\}\cup\{\alpha_{s}\}\cup (\Delta_{0})_{\bar{1}}.$
Choose any $h\in \Omega_{\Phi}\subset
\Omega_{(\Delta_{0})_{\bar{1}}}$ and odd weight vectors $ x_{-1}\in
X_{-1}^{\alpha_{-1}} $ and $ x_{s}\in X_{s}^{\alpha_{s}}.$ Let us
add $x_{-1}$ and $x_{s}$
 to $w_{X}.$ Write $x'=x_{-1}+w_{X}+x_{s}.$ Assert that $\langle h_{X}, x'\rangle=X.$ In fact,   we have
$x_{-1}+x_{s},w_{X},h_{X}\in \langle h_{X}, x'\rangle$. Furthermore,
$X_0=\langle h_{X}, w_{X}\rangle \subset \langle h_{X}, x'\rangle.$
We choose $h'\in \Omega_{\{\alpha_{-1},\alpha_{s}\}}\subset
\mathfrak{h}_{X_{0}}\subset X_{0}$. Then, by the basic fact F2, we have
$x_{-1}$, $x_{s}\in \langle h_{X}, x'\rangle\subset X.$ By Lemma
\ref{lem-Cartan-component1}(1), the irreducibility of $X_{-1}$ and
$X_{s}$ as $X_0$-modules ensures that $X_{-1}\subset X$ and
$X_{s}\subset X.$ According to Lemma \ref{lem-Cartan-component1}(2),
we obtain that $X=\langle h_{X}, x'\rangle$.

For $X=HO(m,m;\underline{t})$  or
$KO(m,m+1;\underline{t})$ with $m$  odd, by Lemma \ref{root
information} we choose
 weights $\alpha_{-1}\in (\Delta_{-1})_{\bar{1}}$ and
$\alpha_{s-1}\in (\Delta_{s-1})_{\bar{1}}$ such that
$\alpha_{-1},\alpha_{s-1}$ and any $\alpha\in
(\Delta_{0})_{\bar{1}}$ are pairwise distinct. Let
$\Phi=\{\alpha_{-1}\}\cup\{\alpha_{s-1}\}\cup
(\Delta_{0})_{\bar{1}}.$ Put $y'=x_{-1}+w_{X}+x_{s-1}$ and choose
$h\in\Omega_{\Phi}\subset \Omega_{(\Delta_{0})_{\bar{1}}}$. Then
$w_{X},$ $x_{-1}+x_{s-1}\in \langle h_{X},y'\rangle.$ From the proof
of Proposition \ref{generators of null}, we have $X_{0}=\langle
h_{X}, w_{X}\rangle\subset \langle h_{X},y' \rangle.$ Then, we
choose $h'\in \Omega_{\{\alpha_{-1},\alpha_{s}\}}\subset
\mathfrak{h}_{X_{0}}\subset X_{0}$. Then, by the basic fact F2, we
have $x_{-1}$, $x_{s}\in \langle h_{X}, y'\rangle\subset X.$ The
irreducibility of $X_{-1}$ as $X_{0}$-module ensures $X_{-1}\subset
X.$ Since $x_{s-1}\in \langle h_{X},y' \rangle$ and transitivity of
$X$, we may conclude that there exists an  element $x_{1}$ of
$X_{1}$ in $\langle h_{X},y'\rangle$ such that
$X_{s}=\mathbb{F}[x_{1},x_{s-1}]\subset \langle h_{X},y'\rangle.$
Then by Lemma \ref{lem-Cartan-component1}, we have $X=\langle
h_{X},y'\rangle.$

Next we prove the second conclusion. From the proof of Proposition
\ref{generators of null}, $X_{0}$ can be generated by $x_{0}+h,$
where $h\in \Omega_{(\Delta_{0})_{\bar{1}}}$ and $x_{0}$ is the sum
of all the standard odd weight vectors for $X=S$, $H,$ $HO,$ $SHO$
or $SKO$ and  the sum of all the odd weight vectors and certain
element $I$ for $X=W,$ $K$ or $KO$.

For $X\neq HO(m,m;\underline{t})$ and $KO(m,m+1;\underline{t})$ with
$m$ odd, by Lemma \ref{root information}, we choose
 weights $\alpha_{-1}\in (\Delta_{-1})_{\bar{1}}$ and
$\alpha_{s}\in (\Delta_{s})_{\bar{1}}$ such that
$\alpha_{-1},\alpha_{s}$ and any $\alpha\in (\Delta_{0})_{\bar{1}}$
are pairwise distinct. Put $x=x_{-1}+x_{0}+x_{s}$ for some odd
weight vectors $ x_{-1}\in X_{-1}^{\alpha_{-1}} $ and $ x_{s}\in
X_{s}^{\alpha_{s}}.$ Let $\Phi=\{\alpha_{-1}\}\cup\{\alpha_{s}\}\cup
(\Delta_{0})_{\bar{1}}.$ Choose any $h\in \Omega_{\Phi}\subset
\Omega_{(\Delta_{0})_{\bar{1}}}$. Assert that $\langle
h+x\rangle=X.$ In fact, we have $x_{-1}+x_{0}+x_{s}\in \langle
h+x\rangle$ and $h\in \langle h+x\rangle.$ Furthermore, according to
the basic fact F2, we have $x_{-1}, x_{0},x_{s}\in \langle
h+x\rangle.$ Then $\langle h+x_0\rangle =X_0\subset \langle
h+x\rangle.$  By Lemma \ref{lem-Cartan-component1}(1), the
irreducibility of $X_{-1}$ and $X_{s}$ as $X_0$-modules ensures that
$X_{-1}\subset X$ and $X_{s}\subset X.$ According to Lemma
\ref{lem-Cartan-component1}(2), we have $X=\langle h+x\rangle$.

For $X=HO(m,m;\underline{t})$ with $m$  odd or
$KO(m,m+1;\underline{t})$ with $m$  odd, by Lemma \ref{root
information} we choose
 weights $\alpha_{-1}\in (\Delta_{-1})_{\bar{1}}$ and
$\alpha_{s-1}\in (\Delta_{s-1})_{\bar{1}}$ such that
$\alpha_{-1},\alpha_{s-1}$ and any $\alpha\in
(\Delta_{0})_{\bar{1}}$ are pairwise distinct. Let
$\Phi=\{\alpha_{-1}\}\cup\{\alpha_{s-1}\}\cup
(\Delta_{0})_{\bar{1}}.$ Put $y=x_{-1}+x_{0}+x_{s-1}$ and choose
$h\in\Omega_{\Phi}\subset \Omega_{(\Delta_{0})_{\bar{1}}}$. Then
$x_{-1},$ $x_{0},$ $x_{s-1}\in \langle h+y\rangle.$ From the proof
of Proposition \ref{generators of null}, we have $X_{0}=\langle
h+x_{0}\rangle\subset \langle h+y \rangle.$ The irreducibility of
$X_{-1}$ as $X_{0}$-module ensures $X_{-1}\subset X.$ Since
$x_{s-1}\in \langle h+y \rangle$ and transitivity of $X$, we
conclude that there exists an  element $x_{1}$ of $X_{1}$ in
$\langle h+y\rangle$ such that
$X_{s}=\mathbb{F}[x_{1},x_{s-1}]\subset \langle h+y\rangle.$ Then by
Lemma \ref{lem-Cartan-component1}, we have $X=\langle h+y\rangle.$
\end{proof}


\begin{thebibliography}{99}
\bibitem{MBJ} J.-M. Bois. Generators of simple Lie algebras in arbitrary
characteristics. \textit{Math. Z.} \textbf{262}(2009): 715--741.

\bibitem{MBJ1} J.-M. Bois. Generators of simple Lie algebras
2. \textit{arXiv}: 0806. 4496vl \textit{math. RT}, 2008.
\bibitem{Fu-Zhang-Jiang} J.-Y. Fu, Q.-C. Zhang and C.-P. Jiang. The Cartan-type
modular Lie superalgebra $KO$. \textit{Commun. Algebra.}
\textbf{34}(1) (2006): 107--128.

\bibitem{GK}  R. Guralnick,   W. Kantor. Probabilistic generation of finite simple groups. \textit{J. Algebra } \textbf{234}(2) (2000): 743--792.

\bibitem{GIT}  T. Ionescu. On the generators of semi-simple Lie
algebras. \textit{Linear Algebra Appl.} \textbf{15}(3) (1976):
271--292.

\bibitem{Kac} V. G. Kac. Lie
superalgebras. \textit{Adv. Math.} \textbf{26} (1977): 8--96.
\bibitem{Kac2} V. G. Kac. Classification of infinite dimensional
simple linearly compact Lie superalgebras. \textit{Adv. Math.}
\textbf{139} (1998): 1--55.
\bibitem{GKM}  M. Kuranish. On everywhere
dense imbedding of free groups in Lie groups. \textit{Nagoya Math.
J.} \textbf{2} (1951): 63--71.

\bibitem{Liu-He}  W.-D. Liu  and  Y.-H. He.
Finite dimensional special odd Hamiltonian superalgebras in prime
characteristic. \textit{Commun. Contemp. Math.} \textbf{11}(4)
(2009): 523--546.
\bibitem{Liu-Yuan-1} W.-D. Liu and J.-X. Yuan. Finite dimensional  special odd
superalgebras over a field of prime characteristic. arXiv: 0911.
3466vl \textit{math. RT}, 2009.
\bibitem{Liu-Zhang-Wang} W.-D. Liu, Y.-Z. Zhang and X.-L. Wang.
The derivation algebra of the Cartan-type Lie superalgebra $HO.$
\textit{J. Algebra} \textbf{273} (2004): 176--205.
\bibitem{Strade-Farnsteiner} H. Strade and R. Farnsteiner. Modular Lie Algebras and Their
Representations. Monographs and Textbooks in Pure and Applied
Mathematics, 116, Marcel Dekker, New York, 1988.
\bibitem{Zhang} Y.-Z. Zhang. Finite-dimensonal Lie superalgebras of
Cartan-type over a field of prime characteristic. \textit{Chin. Sci.
Bull.} \textbf{42} (1997): 720--724.
\bibitem{Zhang-Liu} Y.-Z. Zhang and W.-D. Liu. Modular Lie
superalgebra (in Chinese). Scientific Press, Beijing,  2004.
\bibitem{Z} R.-B. Zhang. Serre presentstions of Lie superalgebras.
\textit{arXiv}: 1101. 3114vl \textit{math. RT}, 2011.
\end{thebibliography}
\end{document}